\documentclass[12pt, oneside]{article}

\usepackage{geometry}                		
\geometry{letterpaper} 





\usepackage{authblk} 
\usepackage{comment}
\usepackage{amssymb, amsthm, mathtools,bbm}
\setlength{\parindent}{0pt}


\newcommand{\rr}{{\mathbb{R}}}

\newcommand{\zz}{{\mathbb{Z}}}
\newcommand{\nn}{{\mathbb{N}}}

\newcommand{\supp}{{\operatorname{supp}\,}}

\newcommand{\beq}[1]{\begin{equation} \label{#1}}
\newcommand{\eeq}{\end{equation}}

\providecommand{\keywords}[1]
{
	\small	
	\textbf{\textit{Keywords---}} #1
}

\newtheorem{theorem}{Theorem}
\newtheorem{corollary}[theorem]{Corollary}
\newtheorem{lemma}[theorem]{Lemma}
\newtheorem{definition}[theorem]{Definition}
\newtheorem{proposition}[theorem]{Proposition}

\numberwithin{equation}{section}
\numberwithin{theorem}{section}


\begin{document}




\title{Super-Dense Sets and Their Role in the Theory of Normal Numbers}



\author[1]{Chokri Manai}
\affil[1]{\small  Courant Institute of Mathematical Sciences, New York University, USA}
\date{\today}			

\maketitle
\begin{abstract}
\noindent We introduce and study a new topological notion of the size of subsets of the real line, called \emph{super-density}. A set $A\subset\mathbb{R}$ is super-dense if for every non-empty open interval $I$ and every nowhere constant continuous function $\varphi\colon I\to\mathbb{R}$, we have $\varphi(I\cap A)\cap A\neq\emptyset$.

\noindent We first establish basic properties of super-dense sets. Our main topological result characterizes them within the framework of Baire category: a set with the Baire property is super-dense if and only if it is co-meager.

\noindent We then investigate the implications for the theory of normal numbers. Our topological considerations imply that no nowhere constant continuous function can map all non-normal numbers to normal numbers. Conversely, we explicitly construct a computable nowhere constant continuous function that maps all normal numbers to non-normal numbers.

\noindent Finally, we provide a constructive algorithm that, given any countable family of nowhere constant continuous functions, produces a real number $x$ such that $x$ and all its images under these functions are non-normal. As a corollary, we obtain the existence of a non-normal number $x$ such that $e^{\alpha x}$ is non-normal for every non-zero algebraic $\alpha$.
\end{abstract}



\keywords{Normal numbers, Baire categories, Baire sets, Super-density, Hausdorff dimension, Cantor function}







\section{Introduction and Main Result}~\label{sec:intro}
Ever since \'{E}mile Borel introduced the concept of normality and showed that almost every number is normal \cite{Bor09}, generations of mathematicians have worked hard to unravel the secrets of normal numbers. Although many profound statements about the set of normal numbers have been shown \cite{Cas59, Bug12, Lev79, Rau76, Sir17, Schm62, Man25} and in recent years there has been increasing interest in the computability of concrete normal numbers \cite{ABSS17,BF02,BHS13}, the most important conjectures concerning normal numbers remain unsolved. In particular, it currently seems infeasible to show that important mathematical constants like $\sqrt{2}, e,$ or $ \pi$ are normal. 
By contrast, transcendental number theory has witnessed significant breakthroughs. Although many conjectures such as Schanuel's conjecture regarding transcendental numbers remain open, Hermite, Lindemann and Weierstrass proved in their seminal works the transcendence of $e$ and $\pi$ over a century ago \cite{Her73,Hil93,Lin82,Wei85}. In fact, the celebrated Lindemann-Weierstrass theorem states that the exponential $e^{\alpha}$ is transcendental for every algebraic number $\alpha \neq 0$. Our main results imply that no similar statement holds in the context of (non-)normal numbers.
Indeed, we construct a non-normal number $x$ such that all exponentials $e^{\alpha x}$ are not normal for every algebraic number $\alpha$. 

Our main insight for the construction of non-normal numbers is to recall that despite being a set of Lebesgue measure zero, the non-normal numbers form a co-meager set and are thus topologically big \cite{AL23, Ol04a, Ol04b}. In this work, we introduce the closely related topological concept of super-density which is of independent interest and will allow us to prove non-normal numbers with desired properties. First, let us recall the definition of a nowhere constant function.

\begin{definition}\label{def:nocon}
	Let $I \subset \rr$ be a non-empty open interval and $\varphi : I \to \rr$ some function. We say that $\varphi$ is non-constant at $t_0 \in I$ if for every $\varepsilon > 0 $
	\begin{equation}
		\varphi((t_0 - \varepsilon, t_0 + \varepsilon)) \setminus\{\varphi(t_0) \} \neq \emptyset.
	\end{equation}
	The function $\varphi$ is nowhere constant on $I$ if it is  non-constant at every $t \in I.$
\end{definition}

We remark that for a function $\varphi$ to be nowhere constant on some interval $I$, it  suffices that the set
$$ D := \left\{ x \in I \, \bigg| \, \liminf_{h \to 0} \left|\frac{\varphi(x+h) - \varphi(x)}{h}\right| > 0 \right\} $$
is dense in $I$. In particular, a Brownian motion is almost surely (with respect to the canonical Wiener measure) a nowhere constant continuous function.
Our main novel concept is content of the following

\begin{definition}\label{def:super}
	We call a set $A \subset \rr$ super-dense, if for every non-empty open interval $I$ and every nowhere constant continuous  function $\varphi : I \to \rr$ one has
	\begin{equation}
		\varphi(I \cap A) \cap A \neq \emptyset. 
	\end{equation}
\end{definition}

The definition of super-density seems not to have been formulated in the mathematical literature before. However, certain constructions are based on related ideas.  For instance, super-density is  related to the definition of a universally meager set in  \cite{Tod07}. Clearly, a super-dense set is dense in the usual sense with respect to the Euclidean metric; and it is also immediate that a super-dense set is necessarily uncountable. Hence, super-dense sets need to be topologically big and the natural question arises to understand the relation between super-density and the well-studied Baire categories. We briefly recall that a real subset $A \subset \rr $ is meager or of first category if it is given by an at most countable union of nowhere dense sets. Otherwise, $A$ is  of second category or a fat / non-meager set. If the complement $A^c := \rr \setminus A$ is meager, $A$ is co-meager. Finally,  a set $A$ has the Baire property if and only if there is an open set $U$ and a meager set $N$ such that $A = U \Delta N$, where $\Delta$ denotes the symmetric difference. Our main result below gives a satisfactory relation between super-density and the Baire categories.

\begin{theorem}\label{thm:topology}
	The following hold.
	\begin{enumerate}
		\item Suppose $A \subset \mathbb{R}$ has the Baire property. Then, $A$ is  super-dense if and only if $A$ is co-meager.
		\item There exists a set $A \subset \mathbb{R}$ such that both $A, A^c$ are dense, but not super-dense.
		\item There exists a set  $A \subset \mathbb{R}$ such that both $A, A^c$ are  super-dense. 
	\end{enumerate}
\end{theorem}

The proof of Theorem~\ref{thm:topology} is spelled out in Section~\ref{sec:prooftop}. We note that Theorem~\ref{thm:topology} implies the following chain of implications for general sets $A$:
\begin{equation}
	A \text{ co-meager } \Rightarrow A\text{ super-dense } \Rightarrow A \text{ non-meager.}
\end{equation}
This follows from the observation that co-meager and meager sets always have the Baire property. Super-density is in general not stable under intersection, that is, there exist super-dense sets $A,B$ such that $A \cap B$ is not super-dense itself. This is immediate from the third assertion. 

The set $A$ from the third assertion is an example for a super-dense set not being co-meager. In view of the first assertion this set $A$ is also an example for a super-dense set without the Baire property.  
By the Baire category theorem, at least one of the sets $A$ or $A^c$ from the second assertion is non-meager and, hence, an example for a set of second category which is not  super-dense.
\vspace{0.3cm}

To formulate the implications of Theorem~\ref{thm:topology} for normal numbers, we recall first some standard terminology. A real number $x \in \rr$ has a unique expansion with respect to an integer base $b \geq 2$ in the form $x = X_1 X_2 \ldots X_K. x_1 x_2 \cdots$ with digits $X_i, x_i \in \{0,1,\ldots, b-1\}$. This $b$-ary expansion of $x$ is unique if we agree to use a finite expansion, i.e., an expansion with only finitely many nonzero digits,  whenever possible. We denote by $\Lambda_{b,d,M}(x)$ the relative fraction of the digit $d$ among the first $M$ fractional $x_i$ in base $b$ (see also \eqref{eq:Lambda}). A real number $x$ is called \textit{simply} $\mathit{b}$-\textit{normal} if  $\lim_{M \to \infty} \Lambda_{b,d,M}(x) = \frac1b$, that is if all digits are asymptotically equidistributed in the $b$-ary expansion of $x$. If not just the digits but all finite words $w \in \bigcup_{k = 1}^{\infty}\{0,1,\ldots, b-1\}^k$ appear with asymptotic frequency $b^{-|w|}$ in the $b$-ary expansion of $x$, we call $x$ a $\mathit{b}$-\textit{normal} number.
Finally, $x$ is an \textit{absolutely normal} number or just a \textit{normal} number if $x$ is $b$-normal with respect to all integer bases $b \geq 2$.  

We denote  by $\mathcal{N}_{b,s} = \{ x \in \rr \, | \, x \text{ is simply } b\text{-normal} \}$ the set of all $b$-simply normal numbers for some fixed integer base $b \geq 2$. Analogously, we introduce the set of all $b$-normal real numbers $\mathcal{N}_b$, and the set of all (absolutely) normal numbers $\mathcal{N}$. Finally, we introduce the set 
\begin{equation}\label{eq:zero1}
	\mathcal{Z}_{b} := \{x \in \rr \, | \, \limsup_{M \to \infty } \Lambda_{b,0,M}(x) = 1 \},
\end{equation}
which contains all numbers which accumulate zeros in their $b$-adic expansion. Theorem~\ref{thm:topology} implies the following
\begin{corollary}\label{cor:abstr}
	The following hold.
	\begin{enumerate}
		\item The set of normal numbers $\mathcal{N}$ is not super-dense.
		\item The sets of non-normal numbers $\mathcal{N}^c$ and $\mathcal{Z}_{b}$ are super-dense for each integer base $b \geq 2$.
		\item $\mathcal{Z}_{b}$ has zero Hausdorff dimension for each integer base $b \geq 2$, whereas $\mathcal{N}^c$  has full Hausdorff dimension of $1$.
	\end{enumerate}
\end{corollary}

\begin{proof}
	That $\mathcal{N}^c$ has Hausdorff dimension $1$ is well known \cite{Egg49} and the proof that $\mathcal{Z}_{b}$ has zero Hausdorff dimension is presented in the appendix in form of Lemma~\ref{lem:dim}. In view of the first statement of Theorem~\ref{thm:topology}, it is enough to show that 
	$\mathcal{Z}_{b}$ is co-meager since $\mathcal{Z}_{b} \subset \mathcal{N}^c$ directly implies that $\mathcal{N}^c$ is then  co-meager and super-dense, too. Moreover, $\mathcal{N}$ is meager, hence not super-dense. The following argument is folklore and presented to be self-contained - refined and more subtle analysis can be found in \cite{AL23, Ol04a, Ol04b}. 
	
	We fix an integer base $b \geq 2$ and  for $x \in \rr$ we denote by $x_i \in \{0, \ldots, b-1\}$ the decimal places of $x$. We define
	$$ G_n := \bigcup_{k \geq n} \{x \in \rr \setminus \mathbb{Q} \, | \, x_m = 0 \text{ for all } m = k, \ldots, k^2  \} =: \bigcup_{k \geq n} G_{n,k} $$
	The set $G_n$ is clearly dense. It is also open since each $G_{n,k}$ is a finite union of intervals with rational boundary points. Hence, $G_n^{c}$ is nowhere dense and \begin{equation}\label{eq:Gb}
		G_b := \bigcap_{n \geq 1} G_n
	\end{equation} is a co-meager $G_{\delta}$ set. Finally we note that $G_b \subset \mathcal{Z}_b \subset \mathcal{N}_{b,s}^c. $
\end{proof}

We see that super-density and the measure-theoretical size of a set are independent of each other. Corollary~\ref{cor:abstr} says: \textit{There is no nowhere constant continuous function that maps all non-normal numbers to normal numbers}. One can slightly sharpen Corollary~\ref{cor:abstr}  the ``absolutely non-normal" numbers $\bigcap_{b = 2}^{\infty}  \mathcal{N}_{b,s}^{c}$ are also super-dense. This result is presented in the appendix as Corollary~\ref{cor:allb}. 

The proof of Theorem~\ref{thm:topology} is non-constructive. Thus, we do not obtain a procedure to compute a non-normal $x$ such that $\varphi(x)$ is also non-normal for a given nowhere constant continuous function $\varphi$. The explicit construction of such numbers is content of our next result. 

\begin{theorem}\label{thm:main}
	Given a non-empty open interval  $I \subset \rr$, a  nowhere constant continuous  function $\varphi \colon I \to \rr$ and integer base $ b \geq 2$, there exists an iterative procedure which computes a real number $x \in I $ such that
	\begin{equation}\label{eq:main}
		x \in \mathcal{Z}_b \, \, \text{ and } \, \, \varphi(x) \in  \mathcal{Z}_b
	\end{equation}
	In fact, we have more generally an iterative procedure, which, given a countable collection $(\varphi_k)_{k \in \nn}$ of nowhere constant continuous functions $\varphi_k \colon I \to \rr$, determines a real number $x \in I$ such that
	\begin{equation}\label{eq:mainfamily}
		\varphi_k(x) \in \mathcal{Z}_b \quad \text{ for all } k \in \nn
	\end{equation}
	
\end{theorem}

The proof of Theorem~\ref{thm:main} is spelled out in Section~\ref{sec:proof1}. Our method is constructive and the output is computable in the sense that under a few regularity assumptions on the functions $\varphi_k$ we can infer from the proof an algorithm computing inductively the digits of a real number $x$ such that all numbers $\varphi_k(x)$ are non-normal. The details are presented in Corollary~\ref{cor:algo} below. 

We conjecture that one can further show that there is in fact no non-constant continuous function $\varphi$ such that $\varphi(\mathcal{N}^{c}) \cap \mathcal{N}^{c} = \emptyset$. However, our constructive proof fails in this situation and it is unclear to us if the statement of  Theorem~\ref{thm:main} still holds for non-constant (but not necessarily nowhere constant) functions. 
We hope to address this question in future research.

Let us discuss a specific consequence of Theorem~\ref{thm:main}. We denote by $\bar{\mathbb{Q}}$ the set of real algebraic numbers. Applying Theorem~\ref{thm:main} to the countable collection $(e^{\alpha x})_{\alpha \in \bar{\mathbb{Q}} \setminus \{0\}}$  and to all open intervals $I = (\alpha, \beta) \subset \rr$, one concludes that the set  $$ A := \{ x \, | \, e^{\alpha x} \text{ not normal for all } \alpha \in \bar{\mathbb{Q}} \setminus \{0\} \}$$ is a dense subset of $\rr$. This shows that  there is no analog of the Lindemann-Weierstrass theorem for normal numbers. As far as we know, this work is the first to give a constructive method  to generate a non-normal number $x \neq 0$ for which $e^x$ is not normal.

Theorem~\ref{thm:main} complements the recent work \cite{Man25}, in which a number $x$ is constructed such that all numbers $f_k(x)$ are normal when a collection of maps $(f_k)_{k \in \nn}$ is given. In the future, we hope to address constructions involving normality and non-normality. For instance, it would be very interesting to determine a normal number $x$ such that $x^2$ is not normal - mimicking the conjectured property of $\sqrt{2}$.
	
\vspace{0.3cm}

The normal numbers $\mathcal{N}$ form the archetype of a set of full measure which is topologically small, i.e., meager and hence not super-dense. It is thus of interest to understand if in this case a map $\varphi$ contradicting super-density relies on nonconstructive tools. It turns out that we are able to explicitly construct a nowhere constant continuous function witnessing that the set of normal numbers is not super-dense.

\begin{theorem}\label{thm:normal}
There is an explicit nowhere constant continuous function $\widehat{C} : \rr \to \rr$ such that 
\begin{equation}
	\widehat{C}(\mathcal{N}) \cap \mathcal{N} = \emptyset.
\end{equation}
\end{theorem}

The function $\widehat{C}$ consists of infinitely many self-similar copies of the well-known Cantor function. The explicit definition and a proof of the desired properties will be given in Section~\ref{sec:normal}. It might be of interest to further understand if $\hat{C}$ is computable as a function, e.g. Type-2 computable \cite{Wei00}. We will not go into this question further here, as it goes beyond the main subject of this paper.


\section{Proof of Main Results}\label{sec:proof}

\subsection{Proof of Theorem~\ref{thm:topology}}\label{sec:prooftop}

In this section, we  prove all statements of Theorem~\ref{thm:topology} one by one. The proofs are all independent of each other and are therefore presented separately. The proof of the first assertion requires several preparatory results. We start by showing that super-dense sets cannot be meager.

\begin{proposition}\label{prop:top1}
	Let $A \subset \mathbb{R}$ be super-dense. Then, $A$ is also non-meager.
\end{proposition}
Since the set of normal numbers $\mathcal{N}$ is meager,  it is not super-dense.
\begin{proof}
	We proceed by contraposition. We fix some meager set $A \subset \mathbb{R}$ and we want to show that $A$ is not superdense. Our argument is nonconstructive. Indeed, let $I = [0,1]$ be the unit interval, $C(I)$ the space of continuous functions on $I$ - equipped with the uniform topology - and consider the sets 
	\begin{align}
		Y &:= \{ f \in C(I) \, | \, f \text{ is constant somewhere} \}, \\
		Z & :=  \{ f \in C(I) \, | \, f(I \cap A) \cap A \neq \emptyset \} \label{eq:setZ}.
	\end{align}
	Since $C(I)$ is a complete metric space, it is enough to show that $Y$ and $Z$ are meager as then Baire's category theorem implies that there exists a nowhere constant $f \in C(I)$ such that $f(I \cap A) \cap A = \emptyset$. Extending such an $f$ linearly on the whole real line, proves that $A$ is not super-dense. 
	
	We first show that $Y$ is meager. 
	If a function $f$ is constant at some $t$, there exists an interval $[q_1, q_2]$ with rational endpoints on which $f$ is constant. Thus,
	$$ Y = \bigcup_{q_1 < q_2 \in [0,1] \cap \mathbb{Q}} \{  f \in C(I) \, | \, f  \text{ is constant on } [q_1, q_2] \} = \bigcup_{q_1 < q_2 \in [0,1] \cap \mathbb{Q}} Y_{q_1, q_2}, $$
	where we used the abbreviation $ Y_{q_1, q_2} := \{  f \in C(I) \, | \, f  \text{ is constant on } [q_1, q_2] \}.$ It suffices to show that each $Y_{q_1, q_2}$ is nowhere dense. Clearly, $Y_{q_1, q_2}$ is a closed set in $C(I)$ and for any $f \in Y_{q_1, q_2}$ and any $\varepsilon > 0$, the function $g_\epsilon(x) = f(x) + \varepsilon x \notin Y_{q_1, q_2} $. Hence, $Y_{q_1, q_2}$ is nowhere dense.

	Suppose now that $A \subset \mathbb{R}$ is in fact closed and nowhere dense. We show that then $Z$ defined as in \eqref{eq:setZ} is nowhere dense (in $C(I)$), too. We fix some $f \in C(I)$ and $\varepsilon > 0$. Since $A$ is nowhere dense, we find for every $x \in I$ a $u_x \in A^c$ such that $|u_x - f(x) | < \varepsilon$. Due to the continuity of $f$, the sets 
	$$ U_x := \{ y \in I \, | \, |f(y) - u_x | < \varepsilon \}$$
	are (relatively) open and let $\widehat{U}_x$  further be the connected component of $ U_x$ containing $x$. Then, the collection $(\widehat{U}_x)_{x \in I}$ forms an open cover of $I$. So, by compactness,  there exists a finite cover  $\widehat{U}_{x_1}, \ldots \widehat{U}_{x_n}$ of $I$ with $x_1 <x_2 < \cdots < x_n$. Using again the assumption that $A$ is closed and nowhere dense, we can find open intervals $J_{x_i, x_{i+1}} \subset (x_i, x_{i+1})$ such that $J_{x_i, x_{i+1}} \subset A^c$ for $i = 1, \ldots, n-1$. 
	Finally, we set $I_{x_i} := \widehat{U}_{x_i} \setminus \left(\bigcup_{j = 1}^{n-1} J_{x_j, x_{j+1}}\right)$ for $i = 1, \ldots, n$. The intervals $I_{x_i}, J_{x_i, x_{i+1}}$ form a partition of $I$ and we define the function $g$ to be $g \equiv u_{x_i}$ on $I_{x_i}$ and the linear interpolation between $u_{x_i}$ and $u_{x_{i+1}}$ on $J_{x_i, x_{i+1}}$. The function $g$ is clearly continuous and by construction $\| f - g \| < \varepsilon$ and $g(A \cap I) \cap A = \emptyset$. Since $\varepsilon > 0$ is arbitrary, $Z$ is nowhere dense.
	
	We turn to the general case that $A$ is meager. Then, we find a countable family of nowhere dense sets $F_k \subset \mathbb{R}$ such that $A = \bigcup_{k = 1}^{\infty} F_k$.   For $K \in \mathbb{N}$, we define $G_K := \bigcup_{k = 1}^K \bar{F}_k$ and 
	$$ Z_K := \{ f \in C(I) \, | \, f(I \cap G_K) \cap G_K \neq \emptyset \}. $$
	Here, we used the standard notation $\bar{D}$ for the topological closure of an arbitrary set $D \subset \rr$. 
	We clearly have $Z \subset \bigcup_{K = 1}^{\infty} Z_K$ and by our previous considerations $Z_K$ is nowhere dense. Hence, $Z$ is a meager subset of $C(I)$.
\end{proof}

In fact, we can further strengthen the assertion of Proposition~\ref{prop:top1}. Recall that a set $A \subset \rr$ is said to be \textit{nowhere meager} if $A \cap I$ is non-meager for every open interval $I \subset \rr$.

\begin{corollary}\label{cor:top1}
	Let $A \subset \mathbb{R}$ be super-dense. Then, $A$ is nowhere meager.
\end{corollary}

\begin{proof}
	We proceed via contradiction. Suppose $A \cap I$ is meager for an open interval $I$. Super-density is invariant under translation and stretching. Hence, we may assume without loss of generality that $ B: = A \cap (0,1)$ is meager. We set 
	$$ \widehat{B} := B \cup (1-B) = B \cup \{ 1 -x \, | \, x \in B \} $$
	and further define the periodized set
	$$ \mathcal{B} := \{x + n \, | \, x \in \widehat{B}, n \in \zz  \}. $$
	It immediately follows that $\mathcal{B}$ is meager, too. By Proposition~\ref{prop:top1}, $\mathcal{B}$ is not super-dense and, thus, there exists a nowhere constant continuous function $\varphi : \rr \to \rr$ and an open interval $J \subset \rr$ such that 
	$$ \varphi( J \cap \mathcal{B}) \cap \mathcal{B} = \emptyset. $$
	Due to the periodicity of $\mathcal{B}$, we may assume that $J \subset (0,1)$. Consider the chainsaw-type function
	$$ g(x):= \sum_{n \in 2 \zz} | x - n| \mathbbm{1}_{[-1,1)}(x-n), $$
	which is exactly the periodic continuation of the absolute value from $[-1,1)$ to $\rr$. We observe that
	$g$ is continuous and $g \circ \varphi$ is still nowhere constant. Furthermore,
	$$ (g \circ \varphi)( J \cap \mathcal{B}) \cap \mathcal{B} = \emptyset. $$
	The reason is that we can write
	$$ g(x) = \begin{cases}
		x - 2n & \text{ if } x \in [2n, 2n +1], \\
		1 - (x - 2n-1) & \text{ if } x \in [2n+1, 2n +2] ,
	\end{cases} $$
	for $n \in \zz$ and the claim follows from the periodicity and reflection invariance of $\mathcal{B}$. We further shrink $J$ such that $\{0,1\} \notin (g \circ \varphi)(J)$. This is always possible due to continuity and the nowhere constancy of $g \circ \varphi$.  Since $g \circ \varphi$ takes only values in $(0,1)$ on $J$ we obtain
	$$ (g\circ \varphi)( J \cap A) \cap A = \emptyset. $$
	Here we used that $J \cap A \subset J \cap \mathcal{B}$ for $J \subset (0,1)$. This clearly shows that $A$ cannot be super-dense.
\end{proof}

While Proposition~\ref{prop:top1} gives a rather satisfactory necessary condition for a real set to be super-dense, the next result deals with a complementary sufficient criterion 

\begin{proposition}\label{prop:top2}
	Let $A \subset \mathbb{R}$ be a  co-meager   $G_{\delta}$-set. Then, $A$ is super-dense.
\end{proposition}

Proposition~\ref{prop:top2} is due to the anonymous referee.
\begin{proof}
	Let $\varphi : \mathbb{R} \to \mathbb{R}$ be continuous and nowhere constant. We claim that for every open interval $I$ there exists an open subinterval $I_0 \subseteq I$ such that $\varphi(I_0)$ contains an open set $J$ in its closure. Indeed, let us fix an open interval $I$ and we pick an $x \in I$. Since $I$ is open, we find an $\varepsilon > 0$ such that $(x-\varepsilon, x + \varepsilon) \subseteq I$. Since $\varphi$ is nowhere constant, we find a $y \in (x-\varepsilon, x + \varepsilon)$ such that $\varphi(x) \neq \varphi(y)$. We set $I_0 = (x,y)$ and denote by $\overline{I_0} = [x,y]$ its closure.  We employ the intermediate value theorem to deduce that $\varphi(\overline{I_0}) \supseteq [\varphi(x), \varphi(y) ]$ and the latter clearly contains an open interval since $\varphi(x) \neq \varphi(y)$.
	
	We  show that $\varphi(A \cap I_0)$ is co-meager in some open interval. By hypothesis, $A$ is co-meager in $\rr$ and, hence, $A \cap I_0$ is co-meager in $I_0$. Of course,  $A \cap I_0$ is also dense in $I_0$. Since $\varphi$ is continuous with $\varphi(I_0) \supseteq J$, the set $\varphi(A \cap I_0)$ is dense in $J$. Since $A$ is a $G_\delta$ set, $\varphi(A \cap I_0)$ is the continuous image of a Borel set and, consequently, $\varphi(A \cap I_0)$ has the Baire property.
	
	Then there exists an open set $U$ such that the symmetric difference $\varphi(A \cap I_0) \triangle U$ is meager. Since $\varphi(A \cap I_0)$ is dense in $J$, we have that $U$ is dense in $J$, so there is some open interval $J' \subseteq J$ such that $J' \subseteq U$ and $U \cap J'$ is dense in $J'$. This means $\varphi(A \cap I_0) \cap J'$ differs from the dense open set $U \cap J'$ by a meager set, hence $\varphi(A \cap I_0) \cap J'$ is co-meager in $J'$.
	
	We have two sets that are co-meager in the open interval $J'$: $\varphi(A \cap I_0) \cap J'$ (by the argument above) and $A \cap J'$ (by the initial assumption that $A$ is co-meager and, thus, co-meager in every open interval). By the Baire Category Theorem, the intersection of two co-meager subsets of the complete metric space $J'$ is non-empty. Thus,
	\[
	\varphi(A \cap I_0) \cap A \supseteq (\varphi(A \cap I_0) \cap J') \cap (A \cap J') \neq \emptyset.
	\] \vspace{-0.2cm} \end{proof}
In a next step, we combine our necessary and sufficient criteria from Corollary~\ref{cor:top1} and Proposition~\ref{prop:top2} to deduce the full characterization of super-dense with the Baire property, i.e. the first assertion of Theorem~\ref{thm:topology}.
\begin{corollary}\label{cor:baire}
	Suppose $A \subset \rr$ has the Baire property. Then, $A$ is super-dense if and only if $A$ is co-meager.
\end{corollary}

\begin{proof}
	Suppose $A$ has the Baire property. We show both implications separately from each other.
	
	Suppose $A$ is co-meager.
	Since $A^c$ is meager, there exists a countable family of nowhere dense sets $N_k$ such that $A^c = \bigcup_{k = 1}^{\infty} N_k$. By definition, the closures $\overline{N}_k$ are nowhere dense, too. It follows that $\bigcup_{k = 1}^{\infty} \overline{N}_k$ is a meager set and, hence,
	$$ G := \left(\bigcup_{k = 1}^{\infty} \overline{N}_k\right)^{c} = \bigcap_{k = 1}^{\infty} \overline{N}_k^{c}  $$
	is a co-meager $G_\delta$-set which is contained in $A$. By Proposition~\ref{prop:top2}, $G$ and thus $A$ is super-dense.
	
	Now let $A$ be super-dense.
	$A$ has the Baire property, so there exists an open set $U$ and a meager set $N$ such that $ A = U \Delta N.$ We claim that $U$ is a dense open set. Indeed, if $U \cap I = \emptyset $ for an open interval, the Baire property implies that $A \cap I$ is meager. This contradicts the super-density of $A$ in view of Corollary~\ref{cor:top1}. But this implies that $A^c \subset U^c \cup N$, where $U^c$ is closed with empty interior, i.e. $U^c$ is nowhere dense, and by definition $N$ is meager. In total, $A^c$ is meager.
\end{proof}

Our proofs of the remaining assertions of Theorem~\ref{thm:topology} are in some sense more explicit. We recall the popular Vitali construction of a non-measurable real set $V$: define the equivalence relation on $\mathbb{R}$
$$ x \simeq y \iff x-y \in \mathbb{Q}, $$
which naturally decomposes $\rr$ into equivalence classes $[x]$. We select exactly one representative of each equivalence class to form the set $V$. Of course, we made use of the axiom of choice in this construction. It is easy to see that we can assume $V$ to be dense. Indeed let $D \subset V$ be  some infinitely countable set and $(I_{q^{(1)}, q^{(2)}} )_{q^{(1)} < q^{(2)} \in \mathbb{Q}}$ be the countable collection of open intervals with rational endpoints $q^{(1)}, q^{(2)}$. Let $d_1, d_2, \ldots$ an enumeration of $D$ and similarly we fix an enumeration $I_{q_k^{(1)},q_k^{(2)}}$ of the rational open intervals. We replace each $d_k \in D$ by some $d_k + r_k \in I_{q_k^{(1)},q_k^{(2)}}$ with $r_k \in \mathbb{Q}$. 
We fix such dense Vitali set $V$ and define $V_q := \{x + q \, | \, x \in V\}$ for $q \in \mathbb{Q}$. By construction all $V_q$ are pairwise disjoint. We further introduce the following partition of the real line
$$ E := \bigcup_{n \in \zz } [2n, 2n+1) \qquad F: = \bigcup_{n \in \zz } [2n-1, 2n).$$
After these preparations, we are ready to state our next result
\begin{proposition}\label{prop:top3}
	Let $A := \bigcup_{q \in E \cap \mathbb{Q}} V_q$. Then, $A^c = \bigcup_{q \in F \cap \mathbb{Q}} V_q$ and both $A,A^c$ are dense, but not super-dense.
\end{proposition}
\begin{proof}
	The representation for $A^c$ follows from the fact that the sets $V_q$ form a partition of the real line. Density is a direct consequence of $V$ itself being dense. Considering $\varphi(x) := x +1$, we directly see that $\varphi(A) = A^c$ and $\varphi(A^c) = A$. Hence, neither of the sets is super-dense.
\end{proof}

For the final assertion, we need a few set theoretic preliminaries. We denote as usual by $\aleph_0$ the cardinality of $\nn$ and by the $\aleph_1$ the smallest uncountable cardinality, i.e., the cardinality of the set of all countable ordinal numbers. The cardinality of the reals is given by $2^{\aleph_0}$ and under the continuum hypothesis $2^{\aleph_0} = \aleph_1.$ We further recall that any set $X$ can be well-ordered which we denote by $\prec.$ For a fixed well-order $\prec$ and every $x \in X$,  let $I_{\prec}(x) = \{t \in X \, | \, t \prec x\}$.
\begin{lemma}\label{lem:set}
	\begin{enumerate}
		\item The cardinality of the set of nowhere constant continuous function is $$\{f : \rr \to \rr \, | \, f \text{ is continuous and nowhere constant} \} = 2^{\aleph_0}. $$
		\item Let $X$ be a set. Then, there exists a well-order such that  $|I_{\prec}(x)| < |X|$.
	\end{enumerate}
\end{lemma}
\begin{proof}
	\begin{enumerate}
		\item Since the set contains all functions of the form $x + \alpha$ with $\alpha \in \rr$, its cardinality is at least $2^{\aleph_0}$. On the other hand, since every continuous function is uniquely determined by its values on $\mathbb{Q}$ the cardinality is at most $|\rr^{\mathbb{Q}}| = 2^{\aleph_0}$.
		\item This statement is  folklore in set theory (see e.g. \cite{AP17}[Theorem 2]). Suppose a well-order $\prec$ has not the property. Then, there is a (with respect to $\prec$) minimal $y \in X$ such that $|I_{\prec}(y)| = | X |$. By definition of cardinality numbers, there exists a bijection $\tau: I_{\prec}(y) \to X$. The new well-order $ x \prec' x' \Leftrightarrow \tau^{-1}(x) \prec \tau^{-1}(x')$ has the desired property.
	\end{enumerate}
\end{proof}
We are ready to complete the proof of Theorem~\ref{thm:topology}.
\begin{proposition}\label{prop:top4}
	There exists a set  $A \subset \mathbb{R}$ such that both $A, A^c$ are  super-dense.
	
\end{proposition}
\begin{proof}
	Let $F := \{f : \rr \to \rr \, | \, f \text{ is continuous and nowhere constant} \}$ and $\prec$ a well-order on $F$ with the property from Lemma~\ref{lem:set}. Let $q_n$ be an enumeration of the rationals. We construct via transfinite induction disjoint sets $A_f^{1}, A_f^{2}$ with $f \in F$ such that 
	\begin{enumerate}
		\item $|A_f^{i}| < 2^{\aleph_0}$ for every $f \in F$,
		\item $A_{f}^{i} \subset A_{f'}^{i}$ for $f \prec f'$,
		\item for every $g \prec f$ and every $q_n$ there exists an $r \in A_f^{i}$ such that $| r -q_n| < 1/n$ and $g(r) \in A_f^{i}$.
	\end{enumerate}
	Fix $f \in F$ and suppose we have already constructed $A_g^{i}$ for $g \prec f, g \neq f$. We first set 
	$$ B_f^{i} := \bigcup_{g \prec f, g \neq f} A_g^{i},$$
	which by assumption satisfies the first two conditions and the third condition except for the function $f$. The $f$-update is done inductively, i.e., a standard induction on $n$, where at each step only finitely many points are added. Let $A_{f,0}^{i} := B_f^{i}$ and suppose we already know $A_{f,n-1}^{i}$. Since $f$ is nowhere constant, the image $f((q_n - 1/n, q_n + 1/n))$ contains an interval and thus has cardinality $2^{\aleph_0}$. This is still true for $ X = f((q_n - 1/n, q_n + 1/n))\setminus (\cup_{i = 1,2} A^{i}_{f,n-1} \cup f( A^{i}_{f,n-1}))$. We pick $y \in X$ and $x \in (q_n -1/n, q_n +1/n)$, and set $A^{1}_{f,n} := A^{1}_{f,n-1} \cup \{y\}\cup\{x\}$. We repeat the construction for  $A^{2}_{f,n}$ and note that $A^{2}_{f,n}$,$A^{1}_{f,n}$ are still disjoint and have cardinality less than $2^{\aleph_0}$.
	
	Having constructed the sets $A^{i}_{f,n}$, we define $A^{i}_f = \bigcup_{n \in \nn} A^{i}_{f,n}$ and by construction these sets have the desired properties.
	
	We define $A := \bigcup_{f \in F} A^{1}_{f}$ and due to the third property of $A_f^{1}$, the set $A$ is super-dense. Since $A^c$ contains all sets $A_f^{2}$, it is super-dense by the same reasoning.
\end{proof}

\subsection{Proof of Theorem~\ref{thm:main}}\label{sec:proof1}

Let us fix some notation. Given an integer base $b \geq 2$, we write $x = 0.x_1 x_2 x_3 \ldots$ for the $b$-ary expansion of $x \in [0,1)$. For $d \in \{0,1,\ldots,b-1\}$ and $M \in \nn$, we set
\begin{equation}\label{eq:Lambda}
\Lambda_{b,d,M}(x) := \frac{1}{M} \sum_{j=1}^{M} \chi_{d}(x_j)
\end{equation}
with the characteristic function $\chi_d$, that is, $\chi_d(d) = 1$ and $\chi_d(k) = 0$ for $k \neq d$. In other words, $\Lambda_{b,d,M}(x)$ is the density of the digit $d$ in the first $M$ digits of the $b$-ary expansion of $x$. For general $x \in \rr$, we simply set $\Lambda_{b,d,M}(x) := \Lambda_{b,d,M}(|x| \, \mathrm{mod} \,  1).$ We are interested in the set $\mathcal{Z}_{b}$
of numbers with accumulations of zeros, that is,
\begin{equation}\label{eq:zero}
    \mathcal{Z}_{b} := \{x \in \rr \, | \, \limsup_{M \to \infty } \Lambda_{b,0,M}(x) = 1 \}
\end{equation}

Proposition~\ref{prop:main} below gives a constructive demonstration for the super-density of $\mathcal{Z}_b$  and, thus, establishes the first part of Theorem~\ref{thm:main}. 
\begin{proposition}\label{prop:main}
    Let $b \geq 2$ be an integer base, $I \subset \rr$ a non-empty open interval, and  $\varphi \colon I \to \rr$  a nowhere constant continuous function. Then, there is an iterative algorithm computing a real number $x$ with
    \begin{equation}\label{eq:prop}
    	x \in \mathcal{Z}_b \, \text{ and } \, \varphi(x) \in  \mathcal{Z}_b.
    \end{equation}
\end{proposition}

The proof of Proposition~\ref{prop:main} relies on an iterative ``zig-zag" construction of an $x \in \mathcal{Z}_{b}$ such that $y = \varphi(x) \in \mathcal{Z}_{b}$, too. Through a two-step update we ensure that the $b$-ary expansion of $x$ and $y$ are dominated by zeros infinitely often.
\begin{proof} We fix the basis $b \geq 2$ and assume without loss of generality that $I \subset (0,1)$. For an $x \in (0,1)$, we denote by $x = 0.d_1 d_2 \ldots$ its $b$-ary expansion in digits $d_i$.  Let us introduce the set $T$,
\begin{equation}
    T := \{ x \in (0,1) \,|\, \exists L \in \nn \text{ such that } x = 0.d_1 d_2 \ldots d_L  \},
\end{equation}
of numbers with finite $b$-ary expansion and we set 
\begin{equation}
    \tau(x) := \min \{ L \in \nn \, | \, x = 0.x_1\ldots x_L \}
\end{equation}
for $x \in T$ and otherwise $\tau(x) = \infty$. Our construction requires a second set 
\begin{equation}
    T' := \bigcup_{x \in T} \bigcup_{ K > \tau(x)^2 +1}  \left\{ x + \frac{1}{b^{K}} \right\},
\end{equation}
where the inner union runs over all integers $K > \tau(x)^2 +1$.  A number $y \in T'$ in the form $x + b^{-K}$ with $x \in T$ and $K > \tau(x)^2 +1$ and this decomposition is in fact unique as $K$ is determined by the position of the last non-zero digit (which must be an isolated '1'). For a $y = x + \frac{1}{b^{K}} \in T'$, we similarly set $\tau'(y) := K.$ The function $\tau'(y)$ is well-defined by our previous observation. One easily sees that both, $T$ and $T'$, are dense sets in $(0,1)$. The idea behind the definition of $T'$ is that the zero digits of some $y = x + \frac{1}{b^{K}} \in T'$ between the positions $\tau(x)$ and $K$ are preserved under small enough subtractions.

We now construct 4 sequences 
$$
(x_M^{(1)})_{M \in \nn }, \, (x_M^{(2)})_{M \in \nn }, \quad (y_M^{(1)})_{M \in \nn }, \, (y_M^{(2)})_{M \in \nn }
$$
and prove that $x := \lim_{M \to \infty} x_M^{(2)} \in \mathcal{Z}_b, $  $y := \lim_{M \to \infty} y_M^{(2)} \in \mathcal{Z}_b, $ and $y = \varphi(x).$

\textit{Step 1: Choice of starting points and intervals } \\
Since $T$ is a dense set, we find some $x_{1}^{(1)} \in T \cap I$
and define $L_{x,1} := \tau(x_{1}^{(1)}).$ Let  $y_{1}^{(1)} := \varphi(x_{1}^{(1)})$ be the corresponding function value.  Moreover, we introduce the intervals $$I_{1,x} = (x_{1}^{(1)}, x_{1}^{(1)} + b^{-L^{2}_{x,1} - f})  \quad I_{1,y} = (-\infty, \infty), $$ 
where we choose the integer $f \in \nn$ big enough s.t. the closure of $I_{1,x}$ is still contained in $I$.

To construct the two other starting values, we recall that $\varphi$ is nowhere constant, and thus we find some $z \in I_{1,x}$ such that $\varphi(z) \neq \varphi(x_{1}^{(1)}).$ Employing the intermediate value theorem and the density of $T'$, we in fact find some $x_{1}^{(2)} \in I_{1,x} $ such that $y_{1}^{(2)} := \varphi(x_{1}^{(2)}) \in T' $. The digits $x_{1}^{(2)}$ at position $L_{x,1}$ to  $L_{x,1}^2$-th are zero.

\textit{Step 2: Iterative procedure:} \\
Suppose that for some $M \in \nn$, we have already defined the values and intervals $$x_M^{(1)} \in T, \, x_M^{(2)} \in I_{M,x} , \quad y_M^{(1)} = \varphi(x_M^{(1)}), \, y_M^{(2)} = \varphi(x_M^{(2)}) \in T' \cap I_{M,y}.  $$

We describe how to construct the $M+1$-th values and intervals. We set $L_{y,M} := \tau'(y_M^{(2)})$ and choose $K_{y,M} >L_{y,M} $ big enough s.t.

 \[ I_{M+1,y} := \left(y_M^{(2)} - \frac{1}{b^{K_{y,M}}}, \, y_M^{(2)} + \frac{1}{b^{K_{y,M}}}\right) \subset I_{M,y}. \]
The $b$-ary expansion of any $z \in I_{M+1,y} $ agrees with the one of $y_M^{(2)}$ up to the index $L_{y,M}-1$ (here we make use of the stability of $T'$ under subtraction due to the additional '1'). By continuity of $\varphi$ we find some $x_{M+1}^{(1)} \in T \cap I_{M,x}$ with $y_{M+1}^{(1)} := \varphi(x_{M+1}^{(1)}) \in I_{M+1,y}.$ We set $L_{x,M+1} := \tau(x_{M+1}^{(1)})$ 
and $$I_{M+1,x} := (x_{M+1}^{(1)}, x_{M+1}^{(1)} + b^{-L^{2}_{x,M+1} - 1}) \cap I_{M,x} \neq \emptyset.$$
As in the first step, one finds
some $x_{M+1}^{(2)} \in I_{M+1,x} $ such that $y_{M+1}^{(2)} := \varphi(x_{M+1}^{(2)}) \in T' \cap I_{M+1,y}  $. We note that the $L_{x,k}$-th to the $L_{x,k}^2$-th digit of $x_{M+1}^{(2)}$ and the$\lceil\sqrt{L_{y,k}}\rceil$-th to the $L_{y,k}$-th digits of $y_{M+1}^{(2)}$ are zero for any $k = 1, \ldots, M$. \\[0.5ex] 

To finish the proof, we observe that both $x_M^{(2)}$ and $y_M^{(2)}$ are Cauchy sequences and, thus, posses limits $x \in I$ and $y \in \rr$. By continuity of $\varphi$, one has $y = \varphi(x).$ By construction of $x$ one immediately obtains that 
$$ \Lambda_{b,0,L_{M,x}^2}(x) \geq 1 - \frac{1}{L_{M,x}},$$
which implies that $x \in \mathcal{Z}_b$ if $L_{M,x} \to \infty.$  If $L_{M,x}$ remains bounded the limit $x$ has only finitely many non-zero digits and thus $x \in \mathcal{Z}_b,$ too. One similarly shows that $y \in \mathcal{Z}_b,$ which completes the proof.
\end{proof}

 The proof of Theorem~\ref{thm:main} in the case of a countable collection of nowhere constant continuous functions $\varphi_k : I \to \rr$ largely builds on the construction for a single function $\varphi$ presented in the proof of Proposition~\ref{prop:main}. 
 
 \begin{proof}[Proof of Theorem~\ref{thm:main} ]
 Let $b \geq 2$ be some integer base, with loss of generality we take $I = (0,1)$ and let $(\varphi_k)_{k \in \nn}$ be a family of nowhere constant continuous functions on $I$. The construction of a number $x$ such that all $\varphi_k(x)$ are non-normal is essentially the same procedure as in Proposition~\ref{prop:main}, but controlling all $\varphi_k$ makes its description more tedious and notation-heavy.  We fix some map $\sigma : \nn \to \nn$ with the property that the cardinality of its preimages $| \sigma^{-1}(k) | = \infty$ for all $k \in \nn$. A simple and computable example for such a map is given by
 $$ \sigma(N^2 + m) := m +1 \quad  \text{ for } N \in \nn \text{ and } 0 \leq m \leq 2N. $$
 
 At step $m$ of our construction, the function $\varphi_{\sigma(m)}$ is of main interest. We denote by 
 $$ S_m := \sigma^{-1}(\{1, \ldots, m\}) $$
 the so far traversed indices and $L_m(k) := |\{1, \ldots, m\} \cap \sigma^{-1}(k) |$ the number of times the integer $k$ has been visited. The numbers $y^{(k)}_j \in T' $ stand for the $j$-th approximation of $\varphi_k(x)$. We denote by $x^{(m)}$ the approximations of the desired number $x \in I$. 
 The idea is to ensure that $ y_k := \varphi_k(x) = \lim_{j \to \infty} y^{(j)}_k$ and to force $y_{k}$ to be zero-heavy, i.e.,  $y_k \in \mathcal{Z}_b$.
 
 \textit{Step 1: Choice of starting points} \\
 Using continuity and the nowhere constancy of $\varphi_{\sigma(1)}$, we find as  in the proof of Proposition~\ref{prop:main} a real $x^{(1)} \in (1/4,3/4)$ and $y_{\sigma(1)}^{(1)} \in T'$ such that $y_{\sigma(1)}^{(1)} = \varphi_{\sigma(1)}(x^{(1)})$.
 
 \textit{Step 2: Iterative procedure} \\
 Suppose we have already constructed $x^{(m)} \in I$ and approximants  $y_{k}^{(L_m(k))} \in T'$ such that \begin{equation}\label{eq:cond} |y_{k}^{(L_m(k))} - \varphi_k(x_m)| <  b^{-\tau'\left(y_{k}^{(L_m(k)))}\right) - 1}
 \end{equation}
for all $k \in S_m$. Here we used the function $\tau'$ from the proof of Proposition~\ref{prop:main}. Our goal is to find $x^{(m+1)} \in I$ and $y^{(L_{m+1}(\sigma(m+1)))}_{\sigma(m+1)} \in T'$ such that $|x^{(m+1)} - x^{(m)}| < 2^{-(m+2)}$, $\tau'\left(y^{(L_{m+1}(\sigma(m+1)))}_{\sigma(m+1)}\right) > m$ and \eqref{eq:cond} holds true for $m+1$, too.

Denote by $U_m$ be the open set, for which $z \in U_m$ satisfies the condition \eqref{eq:cond} and $| z - x^{(m)}| < 2^{-(m+2)}$. The standard argument involving the nowhere constancy and continuity of $\varphi_{\sigma(m+1)}$ and the density of $T'$, we find an $x^{(m+1)} \in U_m$ such that $y^{(L_{m+1}(\sigma(m+1)))}_{\sigma(m+1)} := \varphi_{\sigma(m+1)}(x^{(m+1)}) \in T'$ with $\tau'\left(y^{(L_{m+1}(\sigma(m+1)))}_{\sigma(m+1)}\right) > m.$

\textit{Step 3: Conclusion} \\
The numbers $x^{(M)}$ form a Cauchy sequence with limit $x$. Since $x^{(1)} \in (1/4,3/4)$ and $|x^{(m+1)} - x^{(m)}| < 2^{-(m+2)}$, we have $x \in (0,1)$. Since for any $k \in \nn$ the sequence $y_k^{(m)}$ forms a subsequence of the function values $\varphi_k(x_m)$, we have by continuity $y_k := \varphi_k(x) = \lim_{m \to \infty} y_k^{(m)}$. On the other hand, $$|y_{k}^{(m+1)} - y_{k}^{(m)}| <  b^{-\tau'\left(y_{k}^{(m)} \right) - 1},  $$
which guarantees that there are infinitely many integers $K \in \nn$ such that all digits of $y_k$ between positions $K$ and $K^2$ are zero. Hence, $y_k \in \mathcal{Z}_b$ for all $k \in \nn$.
 \end{proof}
 
 We continue our discussion of Theorem~\ref{thm:main} by giving a rigorous account on when the above construction of $x$ can be turned to an algorithm.
 \begin{corollary}\label{cor:algo}
 	Suppose the functions $\varphi_k : (0,1) \to \rr$ satisfy for every $k \in \nn$ the following conditions. 
 	\begin{enumerate}
 		\item Each $\varphi_k$ is a computable function.
 		\item  $\varphi_k$ is a $C^1$-function, i.e., continuously differentiable.
 		\item The set $ \{ x \in \rr \, | \, \varphi_k'(x) = 0\} $ is finite and its number of elements and their values are computable.
 		
 	\end{enumerate}
 	Then, there exists an algorithm computing an $x \in (0,1)$ such that all $\varphi_k(x)$ are  non-normal.
 \end{corollary}
 
 The first assumption is essentially minimal and the latter two can certainly be further weakened, but are in all practical satisfied and simplify the discussion of the algorithm.
 \begin{proof}
 	We apply the same construction as in the proof of Theorem~\ref{thm:main}. Hence, it suffices to show that there are computable choices for $y_k^{(m)} \in T'$ and $x^{(m)}$. We only demonstrate that the iterative update is computable. Note that due to the second assumption each $\varphi_k$ is piecewise strictly monotone. Moreover, since all functions $\varphi_k$ are computable, a bisection procedure allows us to compute an $\delta_m > 0$ such that for all $z \in [x^{(m)} - \delta_m, x^{(m)} + \delta_m]$,
 	$$ |y_{k}^{(L_m(k))} - \varphi_k(z)| <  b^{-\tau'\left(y_{k}^{(L_m(k)))}\right) - 1}.    $$
 	Computing in a next step, the solutions to $\varphi'_{\sigma(m+1)}(z) = 0$ to a sufficient precision, we can select $z_1, z_2 \in [x^{(m)} - \delta_m, x^{(m)} + \delta_m]$ such that $\varphi_{\sigma(m+1)}(z_1) \neq \varphi_{\sigma(m+1)}(z_2)$. In a next step, we compute $\varphi_{\sigma(m+1)}(z_i)$ to precision $\varepsilon > 0$ (denoted by $\varphi_{\sigma(m+1), \varepsilon}(z_i)$) such that $$ |\varphi_{\sigma(m+1), \varepsilon}(z_1) - \varphi_{\sigma(m+1), \varepsilon}(z_2) | > 4 \varepsilon. $$
 	Choosing $y^{(L_{m+1}(\sigma(m+1))}_{\sigma(m+1)} \in T'$   such that 
 	\[ \left|y^{(L_{m+1}(\sigma(m+1))}_{\sigma(m+1)} - (\varphi_{\sigma(m+1), \varepsilon}(z_1) + \varphi_{\sigma(m+1), \varepsilon}(z_2))/2  \right| < \varepsilon \]
 	we can be sure that there exists some $w \in  [x^{(m)} - \delta_m, x^{(m)} + \delta_m]$ such that $y^{(L_{m+1}(\sigma(m+1))}_{\sigma(m+1)} = \varphi_{\sigma(m+1), \varepsilon}(w)$. In contrast to the proof of Theorem~\ref{thm:main}, we cannot set $x^{(m+1)} = w$, but it is enough to find  $z$ such that \eqref{eq:cond} holds for $m +1$. We can find such an $x^{(m+1)}$ by a bisection procedure.
 	\end{proof}

\subsection{Proof of Theorem~\ref{thm:normal}}\label{sec:normal}

In this section, we construct an explicit function $\widehat{C}$ that contradicts the super-density of $\mathcal{N}$.  
Let us define 
$$ \mathcal{T}_1 := \{x \in \rr \, | \, \text{ the ternary expansion of } x  \text{ has infinitely many ones} \}$$
and recall that the set of $2$-simply normal numbers is given by
$$ \mathcal{N}_{s,2} = \left\{ x \in \rr \, | \, \lim_{M \to \infty} \Lambda_{2,0,M} (x) = \frac12  \right\}.$$
We set
\begin{equation}
	\mathcal{A} := \mathcal{T}_1 \cap \mathcal{N}_{s,2}.
\end{equation}
Of course, $\mathcal{N} \subset \mathcal{A}$. Hence, $\mathcal{A}^c$ is a null set. We want to show
\begin{proposition}\label{prop:normal}
	There is an explicit function $\widehat{C} : \rr \to \rr $ such that
 \begin{equation}	\widehat{C}(\mathcal{A}) \cap \mathcal{A} = \emptyset
 	\end{equation}
\end{proposition}

Theorem~\ref{thm:normal} is a direct consequence of Proposition~\ref{prop:normal}. We start with a technical preparation. 
\begin{lemma}\label{lem:extra}
	Let $(I_k)_{k \in \nn}$ be a countable collection of closed intervals $I_k \subset [0,1]$ whose interiors are pairwise joint. Let $(g_k)_{k \in \nn}$ a countable collection of continuous functions $g_k : [0,1] \to \rr$ with support $\supp(g_k) \subset I_k$. Suppose further that  $ \|g_k \|_{\infty} = \sup_{x \in [0,1]} |g_k(x)| \to 0 $ as $k \to \infty$. Then, the function $g: [0,1] \to \rr$,
	\begin{equation}
		g(x) := \begin{cases} g_k(x) & \text{ if } x \in I_k \\
			0 & \text{ else. }
		\end{cases}
	\end{equation}
	is well-defined and continuous.
	\end{lemma}
	
\begin{proof}
	We note that for each $x \in [0,1]$ at most one function value $g_k(x)$ is non-vanishing. This follows from the fact that $g_k(x)$ can only be non-zero if $x$ is contained in the interior of $I_k$. Hence, $g$ is well-defined and can be represented as series $g(x) := \sum_{k = 1}^{\infty} g_k(x)$. We show that this series converges uniformly, which directly implies the continuity of $g$. Using again the observation that at most one function $g_k$ contributes at each $x$, we obtain 
	$$ \left\|\sum_{k =M}^{N} g_k \right\|_{\infty} \leq \max_{k = M, \ldots, N} \| g_k \|_{\infty}. $$
	Since $ \|g_k \|_{\infty} \to 0$, the Cauchy criterion implies the uniform convergence.
\end{proof}

    We spell out the proof of Proposition~\ref{prop:normal}.
\begin{proof}[Proof of Proposition~\ref{prop:normal}]
	The proof heavily relies on properties of Cantor's function $C:[0,1) \to [0,1),$ also commonly known as devil's staircase (see Figure~\ref{fig:cantor_enriched}(a)). We recall that $C$ is increasing and continuous \cite[Proposition 2.1]{DMRV06}. For our purposes, the following description of $C$ is handy. Let $x = 0. x_1 x_2 x_3 \ldots$ be the ternary expansion of $x \in [0,1)$. If the ternary expansion of $x$ contains an '1',  there exists a minimal index $m \in \nn$ s.t. $x_m = 1$. Otherwise we set $m = \infty$.  The Cantor function maps $x$ to the number $y = C(x)$  whose binary(!) expansion $y = 0. y_1 y_2 y_3 \cdots$ is given by
	$$ y_i := \begin{cases}
		0, & \text{ if }  i > m, \\
		\min\{x_i,1\}, & \text{ if } i \leq m. 
	\end{cases}$$
	Clearly, $C$ maps all numbers in $\mathcal{T}_1$ to values with finite binary expansion. Here and in the following, a number with a \textit{finite expansion} in base $b$ is a number whose infinite $b$-ary expansion has only finitely many non-zero digits.  It clearly follows \begin{equation}\label{eq:dense}
		C(\mathcal{A} \cap (0,1)) \cap \mathcal{A} = \emptyset.
	\end{equation}
	Unfortunately, $C$ is locally constant at the deleted thirds in the iterative construction of the Cantor set. The main idea is to enrich $C$ by self-similar copies of itself in each interval such that it becomes a nowhere constant function, which is continuous and still satisfies \eqref{eq:dense}.
	
	To this end, it is convenient to work with the symmetrized version $C_s$
	\begin{equation}\label{eq:Cs}
		C_s(x) := \min\{C(x), 1 - C(x) \} = \begin{cases} C(x), & \text{ if } x \leq \frac12, \\
			C(1-x), & \text{ if } x > \frac12, 
		\end{cases}  
	\end{equation}
	which is by construction continuous. The second identity in \eqref{eq:Cs} follows from the point symmetry of Cantor's function, i.e., $C(1-x) = 1 - C(x)$ for all $x \in [0,1]$. The point symmetry also implies that $C_s$   preserves the crucial property of the Cantor function that it maps all numbers with at least a '1' in the ternary expansion to a number with finite binary expansion. Figure~\ref{fig:cantor_enriched}(b) illustrates $C_s$.
	
		\begin{figure}[htbp]
		\centering
		\includegraphics[width= 0.8\textwidth]{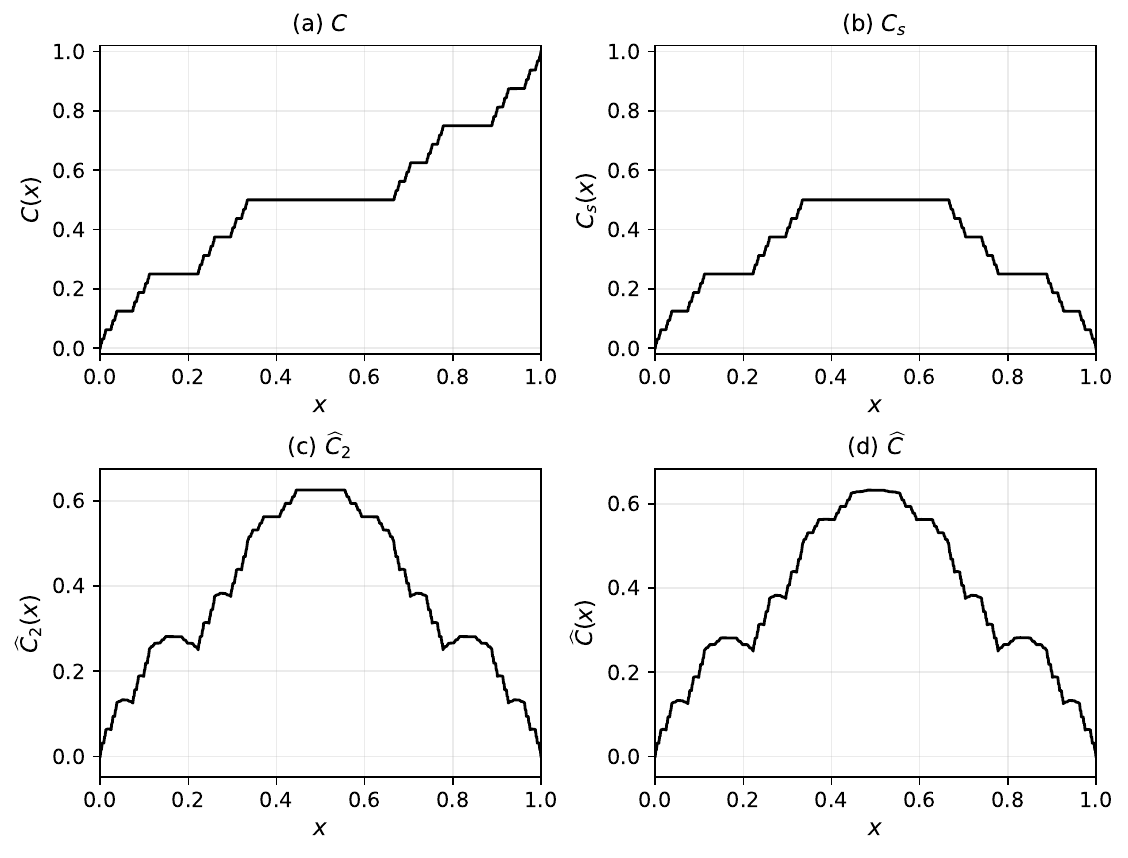}
		\caption{ \small
			Comparison of the classical Cantor function and its enriched variants.\\
			(a) The standard Cantor function $C$, continuous, non-decreasing, and constant on  deleted middle thirds. 
			(b) The symmetrized Cantor function $C_s$. 
			(c) The second approximation $\widehat{C}_2$, obtained by inserting scaled self-similar copies of $C_s$
			after the first occurrence of the digit $1$ in the ternary expansion. 
			(d) The enriched Cantor function $\widehat{C}$ 
			which is continuous and nowhere constant.
		}
		\label{fig:cantor_enriched}
	\end{figure}
	
	We now define the enriched Cantor function $\widehat{C} : [0,1] \to [0,1]$ (see also Figure~\ref{fig:cantor_enriched}(d)). To this end, we distinguish the cases, where $x$ has finitely many '1's and infinitely many ones. If $x$ has no '1', we set $\widehat{C}(x) = C_s(x).$ If $x$ has $m$ '1's in its ternary expansion, we write $x = 0. w_1 w_2 \ldots w_m \sigma, $ where the finite words $w_i$ contain exactly one '1' as their last digit - we call such words \textit{one-stripped} in this proof -  and $\sigma$ is a word without any '1'. Then, we set 
	\begin{equation}\label{eq:cantfinite}
		\widehat{C}(x) := \left(\sum_{r = 1}^{m} 2^{-r\sum_{k =1}^{r-1}|w_r|} C_s(w_r) \right) + 2^{-(m+1)\sum_{k =1}^{r}|w_r|} C_s(\sigma),
	\end{equation}
	where the empty sum is understood as zero. Here, we abuse notation a bit as we identify a ternary word $w$ with the real number $x = 0.w$, i.e. $C_s(w)$ should be understood as $C_s(x)$ with $x = 0.w$.
	In the case $m = 1$, one applies the regular (symmetrized) Cantor function to the first part $w_1$, which gives a number terminating after $|w_1|$ bits. Then, one inserts $|w_1|$-many zeros and continues with $\sigma$ - reinterpreted as bit strings. For higher $m$, one alternates between inserting an increasing number of zeros and inserting the Cantor bit-string of each $w_i$ afterwards. 
	If $x$ has infinitely many '1's in its ternary expansion, we write $x = 0. w_1 w_2 \ldots $ with the same definition for $w_i$ as above and one similarly sets
	\begin{equation}\label{cant:infinite}
		\widehat{C}(x) := \sum_{r = 1}^{\infty} 2^{-r\sum_{k =1}^{r-1}|w_r|} C_s(w_r).
	\end{equation}
	The function $ \widehat{C}(x) $ is nowhere constant as $C(x) \neq C(y)$ for any $y$ with finite ternary expansion and any $x$ with infinitely many '0's and '2's.

	Next, we verify   that $\widehat{C}(x)$ is continuous for which the visualization in Figure~\ref{fig:cantor_enriched}(c) is helpful. We consider the approximations $\widehat{C}_k(x), $ which roughly speaking stop after the $k$-th '1' (similarly to the original Cantor function which stops after the first '1'). A rigorous construction goes as follows. Let $T_k : [0,1) \to [0,1)$ be the truncation map
	$$ T_k(x)  := \begin{cases}
		x & \text{ if the ternary expansion of } x \text{ has less than } k \text{ '1's} \\
		0.w_1w_2 \cdots w_k & \text{ otherwise,}
	\end{cases}   $$ 
	where the $w_i$ are the unique one-stripped words from above with a single '1' as their last digit. We simply set $\widehat{C}_k(x) := \widehat{C}(T_k(x)). $
	Let us show by induction that $\widehat{C}_k(x)$ is continuous for every $k$. Since $\widehat{C}_1(x) = C_s(x)$, this is clear for $k = 1$. Suppose $C_k$ is continuous and we show that $C_{k+1}$ is continuous, too.  The difference $\widehat{C}_{k+1} - \widehat{C}_k$ is nonzero only on closed intervals of the form
	$$ I_{w_1,w_2, \ldots, w_k} := [0.w_1,w_2, \ldots, w_k, w_1,w_2, \ldots, w_k + 3^{-\sum_{j = 1}^{k} |w_j|}]$$
	with one-stripped words $w_i$ and on each such interval we have
	$$ \widehat{C}_{k+1}(0.w_1 \ldots w_k \tau ) - \widehat{C}_{k}(0.w_1 \ldots w_k \tau ) = 2^{-(k+1)\sum_{j =1}^{k}|w_j|} C_s(\tau) $$
	for any ternary word $\tau$. Note that the interiors of the intervals $I_{w_1,w_2, \ldots, w_k}$ are pairwise disjoint and that $\widehat{C}_{k+1}- \widehat{C}_{k}$ is continuous on each $I_{w_1,w_2, \ldots, w_k}$ and vanishes at the respective boundary points. This gives rise to continuous functions $g_{w_1, w_2, \ldots, w_k}$ with support in $I_{w_1,w_2, \ldots, w_k}$. We order the intervals ascendingly in $|w_1| + \ldots + |w_n|$ and since  $\|g_{w_1, w_2, \ldots, w_k}\|_{\infty} \leq 2^{-(k+1)\sum_{j =1}^{k}|w_j|}$ Lemma~\ref{lem:extra} yields the continuity of $\widehat{C}_{k+1}- \widehat{C}_{k} = \sum_{w_1, \ldots, w_k} g_{w_1, w_2, \ldots, w_k}$. By our induction hypothesis, $C_{k+1}$ is continuous.

	 By construction, 
	\begin{equation}\label{eq:Cbound} \sup_{x \in (0,1)}|\widehat{C}(x) -\widehat{C}_k(x)| \leq 2^{-k^2}
	\end{equation}
	so that $\widehat{C}_k$ converges uniformly to $\widehat{C}$. This implies that $\widehat{C}$ is continuous, too.
	
	It remains to show that $\widehat{C}(\mathcal{T}_1) \cap \mathcal{N}_{2,s} = \emptyset. $ To this end, let $x \in \mathcal{T}_1$ and we write for its ternary expansion $x = 0.w_1 w_2 \cdots$ with one-stripped $w_i$. Recall that each $w_i$ is a finite ternary word. Let $\widehat{w_i}$ be the ``Cantor image" of $w_i$ that is the binary string obtained by flipping each '2' in $w_i$ to a '1'. Then, the binary expansion of $\widehat{C}(x)$ has the form $\widehat{C}(x) = \widehat{w_1} z_1 \widehat{w_2} z_2 \cdots$, where each $z_r$ is a string of zeros. Note that after the last digit of $z_m$ the binary expansion of insertion of $\widehat{C}(x)$ has at most $\sum_{r=1}^{m} |w_r| $ '1's but at least $(m-1) \sum_{r=1}^{m} |w_r| $ zeros. That is there a sequence of integers $L_m$ s.t. 
	$$ \Lambda_{2,0,L_m}(\widehat{C}(x)) \geq 1 - \frac1m.$$
	This shows that $\widehat{C}$ has all desired properties on $[0,1]$. Since $\widehat{C}(0) = \widehat{C}(1) = 0$, the periodic extension of $\widehat{C}$ to the whole real line is the desired function.
\end{proof}

\appendix

\section{Proof of $\dim_{\mathcal{H}}(\mathcal{Z}_b) = 0$}

The main purpose of this appendix is to present a proof of the following

\begin{lemma}\label{lem:dim}
	Let $b \geq 2$ be an integer. Then,
	\begin{equation}
		\dim_{\mathcal{H}}(\mathcal{Z}_b) = 0.
	\end{equation}
\end{lemma}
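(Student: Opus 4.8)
The plan is to bound the $s$-dimensional Hausdorff measure of $\mathcal{Z}_b$ for every fixed $s>0$ by an explicit cover, and then let an exponential rate that controls the cover tend to $0$. First I would reduce to the unit interval. Since $\Lambda_{b,0,M}(x)$ depends only on $|x| \bmod 1$, the set $\mathcal{Z}_b$ is the union over $n \in \zz$ of isometric copies of $\mathcal{Z}_b \cap [0,1)$ placed in the intervals $[n,n+1)$. Because Hausdorff dimension is invariant under translations and reflections and is stable under countable unions, it suffices to show $\dim_{\mathcal{H}}(\mathcal{Z}_b \cap [0,1)) = 0$ (the countably many $b$-adic rationals, where the $b$-ary expansion is ambiguous, form a dimension-zero set and may be discarded).

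Next I would pass to a larger, more tractable set. For $\alpha \in (0,1)$ set
\begin{equation}
 E_\alpha := \{ x \in [0,1) : \Lambda_{b,0,M}(x) > \alpha \text{ for infinitely many } M \in \nn \},
\end{equation}
so that $\mathcal{Z}_b \cap [0,1) \subseteq E_\alpha$ for every $\alpha < 1$. For fixed $M$ the set $\{x \in [0,1) : \Lambda_{b,0,M}(x) > \alpha\}$ is a union of $b$-adic cylinder intervals of length $b^{-M}$, one for each length-$M$ word over $\{0,\dots,b-1\}$ having more than $\alpha M$ zeros. Counting by the number $j$ of nonzero digits (so $j < (1-\alpha)M$) gives exactly
\begin{equation}
 N(M,\alpha) = \sum_{j=0}^{\lfloor (1-\alpha)M \rfloor} \binom{M}{j}(b-1)^{j}.
\end{equation}
Writing $\beta := 1-\alpha$, the standard entropy bound $\sum_{j \le \beta M}\binom{M}{j} \le 2^{M H(\beta)}$ (for $\beta \le \tfrac12$, with $H$ the binary entropy) together with $(b-1)^j \le (b-1)^{\beta M}$ yields $N(M,\alpha) \le (M+1)\, b^{\sigma(\beta) M}$, where
\begin{equation}
 \sigma(\beta) := \frac{H(\beta)\ln 2 + \beta \ln(b-1)}{\ln b} \xrightarrow{\ \beta \to 0\ } 0 .
\end{equation}

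Now fix $s>0$. Since the $\limsup$ set satisfies $E_\alpha \subseteq \bigcup_{M \ge N}\{x : \Lambda_{b,0,M}(x) > \alpha\}$ for every $N$, covering each term by the cylinders above gives
\begin{equation}
 \mathcal{H}^{s}_{b^{-N}}(E_\alpha) \le \sum_{M \ge N} N(M,\alpha)\, b^{-sM} \le \sum_{M \ge N}(M+1)\, b^{(\sigma(\beta)-s)M}.
\end{equation}
Whenever $s > \sigma(\beta)$ this is the tail of a convergent series, hence $\to 0$ as $N \to \infty$, so $\mathcal{H}^{s}(E_\alpha)=0$ and $\dim_{\mathcal{H}}(E_\alpha) \le \sigma(\beta)$. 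Therefore $\dim_{\mathcal{H}}(\mathcal{Z}_b \cap [0,1)) \le \sigma(1-\alpha)$ for every $\alpha<1$, and letting $\alpha \to 1$ forces the dimension to $0$; combined with the first paragraph this gives $\dim_{\mathcal{H}}(\mathcal{Z}_b)=0$.

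The only genuinely delicate step is the counting estimate and the verification that its exponential rate $\sigma(\beta)$ really collapses to $0$ as $\beta \to 0$ (note the term $\beta\ln(b-1)$ vanishes identically when $b=2$); everything else is the routine scheme of covering a $\limsup$ set by each of its tail unions. One should also keep track of the non-uniqueness of $b$-ary expansions, but as noted this affects only a countable set and does not influence the bound.
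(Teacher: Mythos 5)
Your proof is correct and follows essentially the same route as the paper's: reduce to $[0,1)$, cover the limsup set by tail unions of $b$-adic cylinder sets of zero-heavy words, bound the number of such words by an exponential estimate whose rate vanishes as the zero-density threshold tends to $1$, and conclude $\mathcal{H}^s = 0$ for every $s > 0$. The only cosmetic differences are that you use the standard binomial entropy inequality where the paper invokes Stirling's formula, and you organize the limit via the auxiliary sets $E_\alpha$ and the dimension bound $\sigma(1-\alpha)$ with $\alpha \to 1$, rather than fixing $s$ and choosing $\varepsilon(s)$ as the paper does.
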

The proof of Lemma~\ref{lem:dim} relies on a standard combinatorial estimate and basic properties of Hausdorff measures. As it does not directly follow from Egglestone's result on the Hausdorff-dimension of sets with specific asymptotic digit frequencies \cite{Egg49},  we choose to sketch the complete argument.

\begin{proof}[Proof of Lemma~\ref{lem:dim}]
	First, we recall the construction of the (outer) Hausdorff measure on the real line. For real numbers $s,\delta > 0$ we introduce the functional
	\begin{equation}
		\mathcal{H}^{s}_{\delta}(A) := \inf \left\{ \sum_{k}^{\infty} |I_k|^s \, \bigg| \, A \subset \bigcup_{k=1}^{\infty} I_k \text{ and } (I_k)_{k} \text{ are open intervals with } |I_k| < \delta \right\}
	\end{equation}
	for any set $A \subset \rr$. Then, the $s$-dimensional (outer) Hausdorff is defined as
	\begin{equation*}
		\mathcal{H}^{s}(A) := \sup_{\delta > 0} \mathcal{H}^{s}_{\delta}(A) = \lim_{\delta \to 0} \mathcal{H}^{s}_{\delta}(A).
	\end{equation*}
	A set $A \subset \rr$ is of zero Hausdorff dimension if and only if $\mathcal{H}^{s}(A) = 0$ for all $s > 0.$ Due to  subadditivity of the Hausdorff measure, it suffices to show
	\begin{equation}\label{eq:epss}
		\mathcal{H}^{s}_{\delta}( \mathcal{Z}_b\cap [0,1))
	\end{equation}
	for all $s, \delta > 0$. From now on, we fix the integer $b \geq 2$ and $s,\delta$. We proceed in two steps.
	
	As a first step, we derive combinatorial bounds on zero-heavy finite words.
	Let us fix some $0 <\varepsilon < \frac12$ and for $M \in \nn$ we consider the set $\Omega_M := \{0,1,\ldots,b-1 \}^M$ of  $b$-ary words of length $M$. For any $w \in \Omega_M$ we set
	$$ N_M(w) = \frac{1}{M} \sum_{j=1}^{M} \chi_{0}(w_j),$$
	and define the set
	$$ \Omega_M^{\varepsilon} := \{ w \in \Omega_M \, | \,N_m(w) \geq 1 - \varepsilon \}. $$
	The size of $\Omega_M^{\varepsilon}$ can be written in terms of a binomial expansion
	\begin{equation} |\Omega_M^{\varepsilon}| = \sum_{K \geq (1-\varepsilon) M}^{M} \binom{M}{K} (b-1)^{M-K} \leq \varepsilon M \binom{M}{\lceil (1-\varepsilon) M \rceil} (1-b)^{\varepsilon M},
	\end{equation}
	where the bound follows from the monotonicity of the binomial. We  employ Stirling's formula to obtain the asymptotic expansion
	$$ \binom{M}{\lceil (1-\varepsilon) M \rceil} = \exp((\gamma(\varepsilon) +o(1)) M),$$
	where we used Landau's $o$-notation and introduced the binary entropy $\gamma : [0,1] \to \rr,$
	\begin{equation}
		\gamma(x) = -x \ln(x) - (1-x) \ln(1-x).
	\end{equation}
	Note that by the above we may find some $M_0 = M_0(\varepsilon)$ s.t. that for all $M \geq M_0(\varepsilon)$
	\begin{equation}\label{eq:boundOmega}
		|\Omega_M^{\varepsilon}| \leq \exp((2 \gamma(\varepsilon) + \varepsilon\ln (1-b))M)
	\end{equation}
	and that the prefactor in the exponential tends to zero as $\varepsilon \to 0$. Therefore, for any $s > 0$, there exists some $\varepsilon(s) $ s.t. for all $\varepsilon < \varepsilon(s)$
	\begin{equation}\label{eq:eps} 2 \gamma(\varepsilon(s)) + \varepsilon(s)\ln (1-b) < \frac{s \ln(b)}{2}  \end{equation}
	
	Now, we construct the desired cover for $\mathcal{Z}_b$. 
	We denote by $\phi_M :[0,1) \to \Omega_M$ the function, which maps $x$ to the word $w$ consisting of the $M$ first digits of $x$ with respect to $b$. The partial inverse $\psi_M :\Omega_M \to [0,1)$ maps a word $w$ to the number $x = 0.w_1 w_2 \ldots w_M.$ By definition,
	\begin{equation}\label{eq:cover}
		\mathcal{Z}_b \cap [0,1) \subset \bigcup_{M = K}^{\infty} \{ x \,| \, N_M(x) \geq 1 - \varepsilon \} = \bigcup_{M = K}^{\infty} \{ x \,| \, \phi_M(x) \in \Omega_{M}^{\varepsilon} \}
	\end{equation}
	for any integer $K$ and any $\varepsilon > 0.$
	Moreover, 
	$$\{ x \,| \, \phi_M(x) \in \Omega_{M}^{\varepsilon} \} \subset \bigcup_{w \in \Omega_{M}^{\varepsilon}} I_M(w) := \bigcup_{w \in \Omega_{M}^{\varepsilon}}\left(\psi_M(w) - \frac{1}{b^M}, \psi_M(w) + \frac{1}{b^M} \right). $$
	
	We choose some $\varepsilon < \varepsilon(s)$ and we consider integers $K$ large enough s.t. $1/b^{K} < \delta$ and \eqref{eq:boundOmega} holds for all $M \geq K.$ Then, 
	\begin{align*} \mathcal{H}^{s}_{\delta}(\mathcal{Z}_b \cap [0,1)) &\leq \sum_{M =K}^{\infty} \mathcal{H}^{s}_{\delta}(\{ x \,| \, \phi_M(x) \in \Omega_{M}^{\varepsilon} \}) \leq \sum_{M =K}^{\infty} \sum_{w \in \Omega_{M}^{\varepsilon}} |I_M(w)|^{s} \\
		& \leq \sum_{M =K}^{\infty} |\Omega_{M}^{\varepsilon}| \frac{2^s}{b^{sM}} \leq 2^s \sum_{M =K}^{\infty} b^{-\frac{s}{2}M} = \frac{2^s}{(1-b^{s/2})b^{Ks/2}}.
	\end{align*}
	The first inequality is a consequence of \eqref{eq:cover} and the fact that the intervals $(I_M(w))_{w \in \Omega_{M}^{\varepsilon}} $ cover the set $\{ x \,| \, \phi_M(x) \in \Omega_{M}^{\varepsilon} \}$. The second line then follows from the choice $\varepsilon < \varepsilon(s),$ and equations \eqref{eq:boundOmega} and \eqref{eq:epss}. 
	As we can choose $K$ as large as we want, the last bound reveals $\mathcal{H}^{s}_{\delta}(\mathcal{Z}_b \cap [0,1)) = 0$, which completes the proof.

\end{proof}

\section{Non-normality for all bases}

 In this appendix, we discuss the simultaneous non-normality for all integer bases.
\begin{corollary}\label{cor:allb}
	Let $I \subset \rr$ be any non-empty open interval and $\varphi \colon I \to \rr$ a nowhere constant continuous  function. Then, 
	\begin{equation}
		\varphi\left(I \cap \bigcap_{b = 2}^{\infty}  \mathcal{N}_{b,s}^{c}\right) \cap \left(\bigcap_{b = 2}^{\infty} \mathcal{N}_{b,s}^{c} \right) \neq \emptyset
	\end{equation}
\end{corollary}

\begin{proof}
	A constructive proof follows essentially the lines of the proof of Theorem~\ref{thm:main}. Instead, we give a topological proof using Theorem~\ref{thm:topology}. Recall the co-meager $G_{\delta}$-set $G_b$ from \eqref{eq:Gb}. 
	We simply define $G := \bigcup_{b \geq 2} G_b$, which is still a co-meager $G_{\delta}$-set. By Theorem~\ref{thm:topology}, $G$ is super-dense. Since $G_b \subset \mathcal{Z}_b$, we also have $G \subset  \bigcap_{b = 2}^{\infty}  \mathcal{N}_{b,s}^{c}$, which proves the super-density of $\bigcap_{b = 2}^{\infty}  \mathcal{N}_{b,s}^{c}$.
\end{proof}

\section*{Acknowledgments}

The author would like to thank Verónica Becher and Christoph Aistleitner for their support and for providing valuable references. Special thanks are due to the anonymous referee for numerous helpful comments on the structure and the content of this work that substantially improved the paper. In particular, the referee inspired Theorem~\ref{thm:topology} and provided  proof ideas for its first statement. More precisely, the proofs of Proposition~\ref{prop:top2} and Corollary~\ref{cor:baire} follow his suggestions.


\begin{thebibliography}{00}


\bibitem{ABSS17} C.Aistleitner, V. Becher, A.-M. Scheerer, T. Slaman. \newblock On the construction of absolutely normal numbers. \newblock {\em Acta Arithmetica} 180, 333-346 (2017).

\bibitem{AL23} A. Aveni, P. Leonetti. \newblock Most numbers are not normal. \newblock { \em Mathematical Proceedings of the Cambridge Philosophical Society}, 175(1), pp. 1–11 (2023).
\bibitem{AP17} A. Avilés, G. Plebanek. \newblock A Little Ado about Rectangles. \newblock {\em The American Mathematical Monthly}, 124(4), 345–350 (2017). 
\bibitem{BF02}  V. Becher, S. Figueira. \newblock An example of a computable absolutely normal number. \newblock {\em Theoretical Computer Science} 270,  947–958 (2002).
 \bibitem{BHS13} V. Becher, P. A. Heiber T. Slaman. \newblock A polynomial-time algorithm for computing absolutely normal
numbers. \newblock {\em Information and Computation} 232, 1-9 (2013).

\bibitem{Birk31} G. D. Birkhoff. \newblock Proof of the Ergodic Theorem. {\em Proc. Nat. Acad. Sci.} 17, 656–660 (1931).
\bibitem{Bor09} E. Borel. \newblock Les probabilit\'{e}s d\'{e}nombrables et leurs applications arithm\'{e}tiques. \newblock {\em Supplemento di Rendiconti
del Circolo Matematico di Palermo} 27, 247–271 (1909).
\bibitem{Bug12} Y. Bugeaud.\newblock On the expansions of a real number to several integer bases. \newblock {\em Revista
Matemática Iberoamericana}, 28(4):931–946 (2012).

\bibitem{Cas59} J.W.S Cassels. \newblock On a problem of Steinhaus about normal numbers. {\em Colloq. Math. 7}, 95–101 (1959).

\bibitem{DMRV06} O. Dovgoshey, O. Martio, V. Ryazanov, M. Vuorinen.
The Cantor function. {\em Expositiones Mathematicae }
 24 (1), 1-37 (2006).

\bibitem{Egg49} H. G. Egglestone. \newblock  The fractional dimension of a set defined by decimal properties. \newblock {\em Quart. J. Math.},
Oxford Ser. 20, 31–36 (1949).

\bibitem{Her73} C. Hermite. \newblock Sur la fonction exponentielle. \newblock {\em I Comptes Rendus Acad. Sci. Paris} 77, 18–24 (1873).

\bibitem{Hil93} D. Hilbert. \newblock Ueber die Transcendenz der Zahlen $e$ und $\pi$. \newblock
{\em Mathematische Annalen} 43, 216–219 (1893).

\bibitem{Lev79} M. Levin. \newblock On absolutely normal numbers. {\em Vestnik Moskovskogo Universiteta.
Seriya 1. Matematika. Mekhanika}, 1:31–37, 87, 1979. English translation in Moscow
University Mathematics Bulletin, 34  no. 1, 32-39 (1979).

\bibitem{Lin82} F. Lindemann. \newblock Über die Zahl $\pi.$
\newblock {\em Mathematische Annalen} 20, 213–225 (1882).

\bibitem{Man25} C. Manai. \newblock Transcendence Meets Normality: Construction of Transcendentally Normal Numbers. \newblock {\em Preprint } arXiv:2508.09319


\bibitem{Ol04a} L. Olsen. \newblock Applications of multifractal divergence points to sets of numbers defined by their N-adic expansion. \newblock {\em  Math. Proc. Camb. Phil. Soc. 136}, no. 1, 139–165 (2004).

\bibitem{Ol04b} L. Olsen. \newblock Extremely non-normal numbers. \newblock {\em Math. Proc. Camb. Phil. Soc. 137}, no. 1, 43–53 (2004).

\bibitem{Rau76} G. Rauzy. \newblock Nombres normaux et processus d\'{e}terministes. \newblock {\em
Acta Arithmetica}, 29:211–225 (1976).



\bibitem{Schm62} W. M. Schmidt. \newblock Über die Normalität von Zahlen zu verschiedenen Basen. \newblock  {\em Acta Arithmetica}, 7:299–309 (1961/1962).

 \bibitem{Sir17} M.W. Sierpinski. \newblock  D\'{e}monstration \'{e}l\'{e}mentaire du th\'{e}or\`{e}me de M. Borel sur les nombres absolument
normaux et d\'{e}termination effective d’un tel nombre. \newblock {\em Bull. Soc. Math. France} 45, 127–132 (1917).

\bibitem{Tod07} S. Todorcevic. \newblock Universally meager sets and principles of generic
continuity and selection in Banach spaces. \newblock {\em Advances in Mathematics } 208,  274–298 (2007).


\bibitem{Wei85} K. Weierstraß. \newblock Zu Lindemann's Abhandlung. ``Über die Ludolph'sche Zahl". \newblock {\em Sitzungsberichte der Königlich Preussischen Akademie der Wissenschaften zu Berlin} 5, S. 1067–1085 (1885).

\bibitem{Wei00} K. Weihrauch. Computable Analysis. Springer-Verlag,
Berlin, 2000.

\end{thebibliography}
\end{document}